\renewcommand*{\backref}[1]{}
\renewcommand*{\backrefalt}[4]{({%
		\ifcase #1 Not cited.%
		\or On p.~#2%
		\else On pp.~#2%
		\fi%
	})}
\crefname{subsection}{Subsection}{Subsection}
\DeclareMathAlphabet{\mathbbe}{U}{bbold}{m}{n}
\def\DDelta{{\mathbbe{\Delta}}}
\newcommand{\DD}{\DDelta}
\newcommand{\C}{\mathscr{C}}
\newcommand{\cC}{\mathcal{C}}
\newcommand{\kC}{\mathfrak{C}}
\newcommand{\D}{\mathscr{D}}
\newcommand{\cK}{\mathcal{K}}
\newcommand{\s}{\mathscr{S}}
\newcommand{\cM}{\mathcal{M}}
\newcommand{\K}{\mathscr{K}}
\newcommand{\set}{\mathscr{S}\mathrm{et}}
\newcommand{\sset}{s\mathscr{S}\mathrm{et}}
\newcommand{\sS}{s\s}
\newcommand{\Fun}{\mathrm{Fun}}
\newcommand{\St}{\mathrm{St}}
\newcommand{\Un}{\mathrm{Un}}
\newcommand{\sSt}{s\St}
\newcommand{\sUn}{s\Un}
\newcommand{\scUn}{s\mathcal{U}\mathrm{n}}
\newcommand{\RFib}{\mathcal{R}\mathcal{F}\mathrm{ib}}
\newcommand{\Cart}{\mathcal{C}\mathrm{art}}
\newcommand{\cFun}{\mathcal{F}\mathrm{un}}
\newcommand{\Kan}{\mathcal{K}\mathrm{an}}
\newcommand{\Catinfty}{\mathcal{C}\mathrm{at}_{\infty}}
\newcommand{\Comp}{\mathcal{C}\mathrm{omp}}
\newcommand{\CSS}{\mathcal{C}\mathcal{S}\mathcal{S}}
\newcommand{\QCat}{\mathcal{Q}\mathcal{C}\mathrm{at}}
\newcommand{\Seg}{\mathcal{S}\mathrm{eg}}
\newcommand{\Nerve}{\mathcal{N}\mathrm{erve}}
\newcommand{\Und}{\mathcal{U}\mathrm{nd}}
\newcommand{\Hom}{\mathrm{Hom}}
\newcommand{\id}{\mathrm{id}}
\newtheorem{theorem}[equation]{Theorem}
\newtheorem{corollary}[equation]{Corollary}
\theoremstyle{remark}
\newtheorem{conjecture}[equation]{Conjecture}
\numberwithin{equation}{section}
\title{Cosmological Unstraightening}
\date{May 2025}
\author{Nima Rasekh}
\address{Institut f{\"u}r Mathematik und Informatik, Universit{\"a}t Greifswald, Greifswald, Germany}
\email{nima.rasekh@uni-greifswald.de}
\subjclass[2020]{18N60, 18N40, 18N50, 18N45}
\keywords{Higher category theory, straightening construction, $\infty$-cosmoi, complete Segal spaces, Cartesian fibrations.}
\begin{document}

\begin{abstract}
 The unstraightening construction due to Lurie \cite{lurie2009htt} establishes an equivalence between presheaves and fibrations, using one prominent model of $(\infty,1)$-categories, namely quasi-categories. In this work we generalize this result by proving that for all $\infty$-cosmoi of $(\infty,1)$-categories in the sense of Riehl and Verity \cite{riehlverity2022elements}, which includes quasi-categories but also complete Segal spaces or $1$-complicial sets, their corresponding notions of fibrations and presheaves are biequivalent $\infty$-cosmoi via a natural zig-zag of cosmological biequivalences. The major idea that makes this possible is a lift of the quasi-categorical unstraightening construction to a cosmological biequivalence.
\end{abstract}

\maketitle

\section{Introduction}
The theory of $(\infty,1)$-categories has played a key conceptual role in many aspects of mathematics, including topology, geometry and representation theory. This has necessitated a rigorous development, which has primarily followed two strands. The \emph{model-dependent} approach uses specific models, such as quasi-categories or complete Segal spaces, to develop the desired theory, whereas the \emph{model-independent approach}, such as $\infty$-cosmoi \cite{riehlverity2022elements} or the axiomatization of \cite{barwickschommerpries2021unicity}, focus on universal properties. Often one starts with a model-dependent definition, benefiting from the point-set properties of the model, only to then later develop a more universal approach, which generalizes the results to all models. An elegant example is given by \emph{adjunctions of $(\infty,1)$-categories}, which were first defined for quasi-categories strongly relying on properties this model provides \cite[Definition 5.2.2.1]{lurie2009htt}, but then redeveloped in the context of $\infty$-cosmoi, using $2$-categorical ideas \cite[Definition 2.1.1]{riehlverity2022elements} in a way that generalizes the original approach \cite[F.5]{riehlverity2022elements}. 

For some quasi-categorical notions their model-independent analogues, while studied, have remained less understood. One interesting example is the unstraightening construction. It has first been introduced by Lurie \cite{lurie2009htt} and subsequently studied by many others \cite{heutsmoerdijk2015leftfibrationi,stevenson2017covariant,riehlverity2018comprehension,cisinski2019highercategories,cisinskinguyen2022straightening}, primarily using quasi-categories. Generally speaking it provides an equivalence between (right or Cartesian) fibrations and presheaves (valued in spaces or $\infty$-categories). It is a key ingredient in a variety of very important higher categorical constructions, such as monoidal structures. While both fibrations and presheaves provide us with examples of $\infty$-cosmoi, the unstraightening construction itself does not satisfy the required technical conditions (as we observe in \cref{subsec:un rfib}). This means the transition of the unstraightening construction from a model dependent approach via quasi-categories to a model independent approach via $\infty$-cosmoi has remained incomplete.

In this work we lift the unstraightening functor to a \emph{cosmological biequivalence} between $\infty$-cosmoi. The key technical insight that makes this possible is the fibrational version of the \emph{universal simplicial model category}, which is a concept introduced by Rezk, Schwede and Shipley \cite{rss2001simplicialmodel}. It was already observed by Joyal and Tierney that the universal simplicial model structure on quasi-categories is given by complete Segal spaces \cite[Section 8]{joyaltierney2007qcatvssegal}, and this was generalized by the author to a relation between Cartesian fibrations on marked simplicial sets and Cartesian fibrations on marked simplicial spaces \cite{rasekh2021cartfibmarkedvscso}. We use this insight to lift the unstraightening construction from (marked) simplicial sets to (marked) simplicial spaces, making it simplicial along the way, fulfilling the precise technical requirement to obtain cosmological biequivalences
\[\scUn_{\QCat,S}\colon \cFun(\kC[S]^{op},\Kan) \to \RFib_{\QCat}(S), \hspace{0.3in} \scUn^+_{\QCat,S}\colon \cFun(\kC[S]^{op},1{-}\Comp)^{core} \to \Cart_{\QCat}(S)^{core}\]
from the $\infty$-cosmoi of (space or $\infty$-category valued) presheaves, to the $\infty$-cosmoi of (right or Cartesian) fibrations (\cref{cor:main}/\cref{cor:main marked}). Building on this result, we show that for every $\infty$-cosmos of $(\infty,1)$-categories $\cK$ (which includes complete Segal spaces, Segal categories and $1$-complicial sets) their notion of fibrations and functors are cosmologically biequivalent 
\[\cFun(\kC_{\cK}[S]^{op},\Kan) \simeq \RFib_{\cK}(S), \hspace{0.3in}  \cFun(\kC_{\cK}[\C]^{op},1{-}\Comp)^{core} \simeq \Cart_{\cK}(\C)^{core}\]
via natural zig-zags of cosmological biequivalences (\cref{cor:cosmological}/\cref{cor:cosmological marked}). Finally, for the particular relevant choices of $\infty$-cosmoi, such as complete Segal spaces, one gets a direct unstraightening cosmological biequivalence (\cref{cor:cosmological special}/\cref{cor:cosmological special marked})
\[\scUn_{\cK,S}\colon \cFun(\kC[S]^{op},\Kan) \to \RFib_{\QCat}(S), \hspace{0.3in} \scUn^+_{\cK,S}\colon \cFun(\kC[S]^{op},1{-}\Comp)^{core} \to \Cart_{\QCat}(S)^{core}\]
Note that while \ref{cor:main},\ref{cor:cosmological} and \ref{cor:cosmological special} are the best possible result, \ref{cor:main marked}, \ref{cor:cosmological marked} and \ref{cor:cosmological special marked} could theoretically permit a generalization via an enrichment over quasi-categories. Obtaining such a result currently faces significant theoretical challenges, further discussed in \cref{sec:warning}.

The cosmological biequivalences we obtain also have further computational benefits. Indeed, we often want to analyze the straightening-unstraightening adjunction at the level of underlying $\infty$-categories. We can always obtain an adjoint equivalence of quasi-categories out of every Quillen equivalence, however the general construction is computationally challenging \cite{lowmazelgee2015fraction}. On the other hand, a \emph{simplicially enriched} Quillen adjunction of simplicially enriched model categories permits a very direct description of the underlying adjoint equivalence of quasi-categories via the homotopy coherent nerve \cite[Proposition 5.2.4.6]{lurie2009htt}, which we present in \cref{cor: cor} and \cref{cor:cor marked}.

\subsection{Acknowledgment}
I want to thank Raffael Stenzel for helpful discussions which motivated this project in the first place. I also thank Lyne Moser, Martina Rovelli, Viktoriya Ozornova and Gijs Heuts for helpful discussions regarding model structures on double categories and unstraightening. Moreover, I thank the referee for many helpful comments and particularly for catching an error in an earlier draft. Finally, I also thank the Max Planck Institute for Mathematics in Bonn for its hospitality and financial support.

\section{Reviewing Concepts}
In this section we present a high level overview of the pertinent results from the theory of (marked) simplicial sets and spaces, their model structures and straightening construction as well as relevant concepts regarding $\infty$-cosmoi, leaving precise definitions and more thorough treatment to the references. Only we have chosen slightly different notation regarding $\infty$-cosmoi, as explained in \cref{subsec:cosmos}.

\subsection{Unstraightening right fibrations} \label{subsec:un rfib}
 Let $\sset$ denote the category of simplicial sets and note it comes with the Kan model structure, that we denote $\sset^{Kan}$. Fix a simplicial set $S$. Following \cite[Definition 1.1.5.1]{lurie2009htt}, the simplicially enriched category $\kC[S]$ denotes the categorification of $S$. We denote by $\Fun(\kC[S]^{op},\sset)$ the category of simplicial enriched functors and note it comes with a projective model structure we denote by $\Fun(\kC[S]^{op},\sset^{Kan})^{proj}$, which is in fact simplicial \cite[Proposition A.2.8.2]{lurie2009htt}.

The over-category $\sset_{/S}$ comes with the contravariant model structure \cite[Proposition 2.1.4.7, Remark 2.1.4.12]{lurie2009htt}, denoted $(\sset_{/S})^{contra}$ with fibrant objects \emph{right fibrations} $R \to S$ \cite[Proposition 2.1.4.9]{lurie2009htt}, and that is also simplicial \cite[Proposition 2.1.4.8]{lurie2009htt}. The \emph{straightening construction} \cite[Theorem 2.2.1.2]{lurie2009htt} gives us an equivalence between these two model structures
\begin{equation} \label{eq:straightening}
	\begin{tikzcd}
		(\sset_{/S})^{contra} \arrow[r, "\St_S", "\bot"', shift left=2] & \Fun(\kC[S]^{op},\sset^{Kan})^{proj} \arrow[l, "\Un_S", shift left=2]
	\end{tikzcd}.	
\end{equation}
 	While both model structures are simplicially enriched, the adjunction is not simplicial. Indeed, if $S=\Delta[0]$ then the contravariant model structure is the Kan model structure on $\sset$ \cite[Lemma 2.1.3.3]{lurie2009htt} and we have $\St_{\Delta[0]}(\Delta[0]) = \Delta[0]$. As every simplicially enriched left adjoint out of $\sset$ is uniquely determined by the value at $\Delta[0]$, $\St_{\Delta[0]}$ can only be simplicially enriched if it is the identity, which is not the case \cite[Remark 2.2.2.6]{lurie2009htt}, meaning the functor $\St_{\Delta[0]}$ is not simplicially enriched. 

\subsection{Unstraightening Cartesian fibrations}
 Let $\sset^+$ denote the category of \emph{marked simplicial sets} with objects pairs $(X,A)$ where $X$ is a simplicial set and $A \subseteq X_1$ such that $A$ includes degenerate $1$-cells. See \cite[Definition 3.1.0.1]{lurie2009htt} for more details. Fix a simplicial set $S$ and let $S^\#$ denote the marked simplicial set $(S,S_1)$. We denote by $(\sset^+_{/S^\#})^{Cart}$, which we simplify to $(\sset^+_{/S})^{Cart}$, the Cartesian model structure over $S$ \cite[Proposition 3.1.3.7]{lurie2009htt} with fibrant objects Cartesian fibrations \cite[Proposition 3.1.4.1]{lurie2009htt}, and note it is simplicial, meaning enriched over the Kan model structure \cite[Corollary 3.1.4.4]{lurie2009htt}. In the particular case of $S = \Delta[0]$, the Cartesian model structure $(\sset^+)^{Cart}$ has fibrant objects $(\C,\C^{equiv})$, where $\C$ is a quasi-category and $\C^{equiv}$ is the set of equivalences, and in fact forgetting the marking $(\sset^+)^{Cart} \to \sset^{Joy}$ is a Quillen equivalence \cite[Theorem 3.1.5.1]{lurie2009htt}. 
	The straightening construction mentioned above generalizes to the \emph{marked straightening construction} \cite[Theorem 3.2.0.1]{lurie2009htt}
 \begin{equation} \label{eq:straightening marked}
 	\begin{tikzcd}
 		(\sset^+_{/S})^{Cart} \arrow[r, "\St^+_S", "\bot"', shift left=2] & \Fun(\kC[S]^{op},(\sset^+)^{Cart})^{proj} \arrow[l, "\Un^+_S", shift left=2]
 	\end{tikzcd}.	
 \end{equation}
 Notice similar to the unmarked case, the adjunction $(\St^+_S,\Un^+_S)$ is not simplicial. Before we end this subsection let us note that the functor $\Un^+_S$ is in fact simplicially enriched, as explained in the the beginning of \cite[Subsection 3.2.4]{lurie2009htt}, and it is the failure of  $\Un^+_S$ to preserve simplicial cotensors, which prevents $(\St^+_S,\Un^+_S)$ from being a simplicially enriched adjunction.

\subsection{Marked simplicial spaces} \label{subsec:marked spaces}
 Let $\sS$ denote the category of bisimplicial sets, which we call simplicial spaces and fix a simplicial space $X$. The over-category $\sS_{/X}$ comes with the contravariant model structure, denoted $(\sS_{/X})^{contra}$ with fibrant objects \emph{right fibrations} $R \to X$, and that is also simplicial \cite[Theorem 3.12]{rasekh2023left}. Moreover, for a given simplicial set $S$, there is a Quillen equivalence of contravariant model structures \cite[Theorem B.12]{rasekh2023left}
 \begin{equation} \label{eq:contra}
 	\begin{tikzcd}
 		(\sset_{/S})^{contra} \arrow[r, "p_1^*", "\bot"', shift left=2] & (\sS_{/S})^{contra} \arrow[l, "i_1^*", shift left=2]
 	\end{tikzcd}.	
 \end{equation}
 Notice, as $p_1^*(T)$ (for $T$ a simplicial set) is a constant simplicial space \cite[Theorem 4.11]{joyaltierney2007qcatvssegal}, we will again denote it by $T$, simplifying notation.  
 
 Next, let $\sS^+$ denote the category of \emph{marked simplicial spaces}, with objects pairs $(X,A)$ with $X$ a simplicial space and $A \subseteq X_1$ a sub-simplicial set that includes the degenerate $1$-simplices. See \cite[Subsection 2.1]{rasekh2021cartfibmarkedvscso} for a more detailed discussion. Fix a simplicial space $X$ and let $X^\#$ denote the marked simplicial space $(X,X_1)$. We denote by $(\sS^+_{/X})^{Cart}$ the Cartesian model structure over $X$ with fibrant objects Cartesian fibrations (in the sense of \cite[Definition $2.40$]{rasekh2021cartfibmarkedvscso}), and note it is also simplicial \cite[Theorem 2.44]{rasekh2021cartfibmarkedvscso}. Finally, for a given simplicial set $S$, there is a Quillen equivalence of Cartesian model structures \cite[Theorem 2.47]{rasekh2021cartfibmarkedvscso}
  \begin{equation} \label{eq:contra marked}
 	\begin{tikzcd}
 		(\sset^+_{/S})^{Cart} \arrow[r, "(p^+_1)^*", "\bot"', shift left=2] & (\sS^+_{/S})^{Cart} \arrow[l, "(i^+_1)^*", shift left=2]
 	\end{tikzcd}.	
 \end{equation}

\subsection{\texorpdfstring{$\infty$}{oo}-Cosmoi} \label{subsec:cosmos}
$\infty$-cosmoi are categories strictly enriched over quasi-categories with finite weighted limits and a notion of fibration, called \emph{isofibration} \cite[Definition 1.2.1]{riehlverity2022elements}. The guiding example is the $\infty$-cosmos of quasi-categories ($\QCat$) \cite[Proposition 1.2.10]{riehlverity2022elements}, but it also includes other models such as complete Segal spaces ($\CSS$), Segal categories ($\Seg$) and $1$-complicial sets ($1{-}\Comp$) \cite[Example 1.2.24]{riehlverity2022elements}. These $\infty$-cosmoi can all be obtained by starting with a model structure enriched over the Joyal model structure in which all fibrant objects are cofibrant, and then restricting to the full enriched subcategory of fibrant objects \cite[Proposition. E.1.1]{riehlverity2022elements}, and hence are called the \emph{underlying $\infty$-cosmos} of the	model structure.
	
The $\infty$-cosmoi from the previous paragraph are in fact all equivalent to the $\infty$-cosmos of quasi-categories \cite[Example 1.3.9]{riehlverity2022elements}, in the following precise sense. First recall a cosmological functor of $\infty$-cosmoi is an enriched functor that preserves fibrations and limits \cite[Definition 1.3.1]{riehlverity2022elements} and it is a cosmological biequivalence if it is fully faithful and essentially surjective \cite[Definition 1.3.8]{riehlverity2022elements}. Now, for each one of these examples $\cK$, the functor $\Hom_{\cK}(1,-)\colon \cK \to \QCat$ that evaluates at the terminal object and we denote by $\Und$ (as it gives us the \emph{underlying quasi-category}) is a cosmological biequivalence. Given their importance such $\infty$-cosmoi are called \emph{$\infty$-cosmoi of $(\infty,1)$-categories} \cite[Definition 1.3.10]{riehlverity2022elements}. Notice, prominent examples of $\infty$-cosmoi of $(\infty,1)$-categories also have explicitly known inverse biequivalences $\Nerve\colon \QCat \to \CSS $, $\Nerve\colon \QCat \to \Seg$ and $(-)^{\natural}\colon \QCat \to 1{-}\Comp$.
 
As every Kan complex is in particular a quasi-category, appropriately chosen Kan enriched categories also give us $\infty$-cosmoi, called \emph{discrete} \cite[Definition 1.2.26]{riehlverity2022elements}. The key example of a discrete $\infty$-cosmos is the category of Kan complexes ($\Kan$) \cite[Proposition 1.2.12]{riehlverity2022elements}. For a given $\infty$-cosmos $\cK$, we can restrict each mapping quasi-category to its maximal underlying Kan complex (also called the core \cite[Definition 12.1.8]{riehlverity2022elements}), to obtain the discrete $\infty$-cosmos $\cK^{core}$.
 
For every $\infty$-cosmos $\cK$, there is a model independent way to study various fibrations over an $\infty$-category $\C$ in $\cK$. As a result we can define the $\infty$-cosmos of Cartesian fibrations over $\C$, which we denote by $\Cart_{\cK}(\C)$ \cite[Definition 5.2.1]{riehlverity2022elements}, and its sub-$\infty$-cosmos of right fibrations $\RFib_{\cK}(\C)$ (there called \emph{discrete Cartesian fibrations} \cite[Definition 5.5.3]{riehlverity2022elements} as explained in  \cite[Digression 5.5.4]{riehlverity2022elements}). 

Let $\cC$ be a simplicially enriched category. We want to construct the $\infty$-cosmos of functors from $\cC$ with values in spaces (or $\infty$-categories). More explicitly, we would like to construct the $\infty$-cosmoi $\Fun(\cC, \Kan)$ and $\Fun(\cC,1{-}\Comp)$. Unfortunately, due to the strict enrichment, the naive functor categories would not be $\infty$-cosmoi and would hence have to be suitably derived. Fortunately, the $\infty$-cosmoi $\Kan$ and $1{-}\Comp$ both arise as the underlying $\infty$-cosmos of a model structure, and in those cases the functor $\infty$-cosmos admits a more direct construction, which we discuss next. 

Let $M$ be a combinatorial model category enriched over the Joyal model structure with all objects cofibrant, and denote its underlying $\infty$-cosmos (the full subcategory of fibrant objects) by $\cM$. Then the category of enriched functors $\Fun(\cC,M)^{inj}$ with the injective model structure is again enriched over the Joyal model structure with all objects cofibrant \cite[Proposition A.3.3.2]{lurie2009htt}, and hence has an underlying $\infty$-cosmos, which we denote by $\cFun(\cC,\cM)$.

We can now apply this construction of the functor $\infty$-cosmos to our situation of interest. For a given $\infty$-cosmos of $(\infty,1)$-categories $\cK$ and $\infty$-category $\C$ in $\cK$, define the simplicially enriched category $\kC_\cK[\C]$ as $\kC_\cK[\C] = \kC[\Und\C]$. Following the discussion in the previous paragraph, we now define the functor $\infty$-cosmoi $\cFun(\kC_\cK[\C]^{op}, \Kan)$ and $\cFun(\kC_\cK[\C]^{op}, 1{-}\Comp)$ as the underlying $\infty$-cosmoi of the injective model structures on strict functor categories $\Fun(\kC_\cK[\C]^{op}, \sset^{Kan})^{inj}$ and $\Fun(\kC_\cK[\C]^{op}, (\sset^+)^{Cart})^{inj}$.
	
By \cite[Corollary 10.3.7]{riehlverity2022elements}, all these constructions are stable under cosmological biequivalences, meaning for a given cosmological biequivalence $F\colon\cK \to \cK'$ we get cosmological biequivalences $F\colon\Cart_{\cK}(\C) \to \Cart_{\cK'}(F\C)$ and $F\colon\RFib_{\cK}(\C) \to \RFib_{\cK'}(F\C)$. Additionally, combining this with \cite[Remark 2.1.4.11, Theorem 3.2.0.1]{lurie2009htt}, an equivalence $f\colon \C \to \D$ of $\infty$-categories in an $\infty$-cosmos $\K$ induces cosmological biequivalences $f^*\colon\Cart_{\cK}(\D) \to \Cart_{\cK}(\C)$ and $f^*\colon\RFib_{\cK}(\D) \to \RFib_{\cK}(\C)$ via pullback. Moreover, every cosmological biequivalence comes with a quasi-pseudoinverse \cite[Proposition 10.4.16]{riehlverity2022elements}, which also preserves and reflects all properties stated in \cite[Proposition 10.3.6]{riehlverity2022elements}. Finally, the right Quillen functor of every simplicially enriched Quillen adjunctions (equivalence) gives us such a cosmological functor (biequivalence) \cite[Corollary E.1.2]{riehlverity2022elements}, which means that $\Un_S$ and $\Un^+_S$ do not give us cosmological biequivalences. The best result regarding $\Un^+$ one can expect is a weaker statement involving natural equivalences, as established in \cite{stenzel2024unstraightening}.
 
\section{Cosmological Unstraightenings}
We now lift the straightening constructions \ref{eq:straightening} and \ref{eq:straightening marked} from simplicial sets to simplicial spaces, with the goal of getting cosmological biequivalences. For this section fix a simplicial set $S$. 

\subsection{A cosmological unstraightening for right fibrations}
We will commence with the unmarked case. Notice, there is an isomorphism of categories $\sS_{/S} \cong \Fun((\DD_{/S})^{op} \times \DD^{op},\set) \cong \Fun((\DD_{/S})^{op},\sset)$. Moreover, under this isomorphism of categories, the functor $p_1^*\colon \Fun((\DD_{/S})^{op},\set) \to \Fun((\DD_{/S})^{op},\sset)$, introduced in \ref{eq:contra}, is simply given by post-composition with the evident inclusion $\set \to \sset$. We can hence conclude that a simplicially enriched functor out of $\sS_{/S}$ is uniquely determined on the full subcategory $\DD_{/S}$ and so there is a unique simplicially enriched lift
\begin{equation} \label{eq:lift}
	\begin{tikzcd}
		\sset_{/S} \arrow[r, "p_1^*"] \arrow[dr, "\St_S"'] & \sS_{/S} \arrow[d, "\sSt_S"] \\
		& {\Fun (\kC[S]^{op},\sset)}
	\end{tikzcd},
\end{equation}
which for a given map of simplicial sets $T \to S$ and $n \geq 0$ satisfies $\sSt_S(T \times \Delta[n] \to S)= \St_S(T \to S) \times \{ \Delta[n]\}$, where $\{\Delta[n]\}$ denotes the constant functor with value $\Delta[n]$. 

By construction $\sSt_S$ preserves colimits and hence has a right adjoint $\sUn_S\colon \Fun(\kC[S]^{op},\sset) \to \sS_{/S}$. As, by definition, we have $\sSt_S(p_1)^* = \St_S$, by uniqueness of right adjoints we also have $(i_1)^*\sUn_S = \Un_S$. We now have the following main result.

\begin{theorem} \label{the:main}
	$(\sSt_S,\sUn_S)$ is a simplicially enriched Quillen equivalence between the contravariant model structure and the projective model structure.
\end{theorem}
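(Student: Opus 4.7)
The plan is to verify three properties in sequence: that $(\sSt_S,\sUn_S)$ is a simplicially enriched adjunction, that it is a Quillen adjunction, and that it is a Quillen equivalence.

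Simplicial enrichment is essentially forced by the construction. Since $\sS_{/S}\cong\Fun((\DD_{/S})^{op},\sset)$ is a cocomplete simplicial presheaf category and $\sSt_S$ was built as the unique colimit-preserving simplicially enriched lift of $\St_S$ along $p_1^*$, the enriched adjoint functor theorem produces a simplicially enriched right adjoint $\sUn_S$, and the resulting adjunction is simplicial.

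For the Quillen adjunction I would show that $\sSt_S$ preserves cofibrations and trivial cofibrations. The crucial observation is that, as a simplicial model structure, $(\sS_{/S})^{contra}$ admits a generating set of (trivial) cofibrations consisting of pushout-products of the form $p_1^*(i)\,\square\, j$, where $i$ is a generating (trivial) cofibration of $(\sset_{/S})^{contra}$ and $j$ is a generating (trivial) cofibration of $\sset^{Kan}$, with the pushout-product being trivial as soon as either factor is. Because $\sSt_S$ is simplicially enriched, preserves colimits, and satisfies $\sSt_S \circ p_1^* = \St_S$, it commutes with the pushout-product construction and yields
\[ \sSt_S\bigl(p_1^*(i)\,\square\, j\bigr) \;=\; \St_S(i)\,\square\, j. \]
Since $\St_S$ is left Quillen, $\St_S(i)$ is a (trivial) cofibration in $\Fun(\kC[S]^{op},\sset^{Kan})^{proj}$, and the pushout-product axiom in this simplicial model structure then guarantees that $\St_S(i)\,\square\, j$ is itself a (trivial) cofibration. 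Hence $\sSt_S$ preserves a set of generators, so is left Quillen.

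For the Quillen equivalence I would invoke the 2-out-of-3 property for Quillen equivalences. We have three Quillen adjunctions assembled in the evident commutative triangle: $(p_1^*,i_1^*)$, a Quillen equivalence by \cite[Theorem B.12]{rasekh2023left}; $(\St_S,\Un_S)$, a Quillen equivalence by \cite[Theorem 2.2.1.2]{lurie2009htt}; and our new $(\sSt_S,\sUn_S)$, satisfying $\sSt_S \circ p_1^* = \St_S$ by construction. Two of the three being Quillen equivalences forces the third to be one as well. The main technical obstacle lies in the second step, namely justifying the pushout-product description of the generators of $(\sS_{/S})^{contra}$; this is expected of a simplicial left Bousfield localization of an injective or Reedy structure on simplicial presheaves, but one must check that it matches the concrete construction of \cite[Theorem 3.12]{rasekh2023left}. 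Once this point is settled, the remainder of the argument is largely formal.
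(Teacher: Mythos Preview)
Your outline agrees with the paper at the bookends: simplicial enrichment is by construction, and the Quillen equivalence follows by $2$-out-of-$3$ from \ref{eq:straightening} and \ref{eq:contra}. The divergence is entirely in the middle step, and there your acknowledged gap is real.

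You argue that $\sSt_S$ is left Quillen by checking it on a putative generating set of trivial cofibrations of $(\sS_{/S})^{contra}$ of the form $p_1^*(i)\,\square\,j$. The cofibration half of this is fine: monomorphisms of bisimplicial sets over $S$ are indeed generated by such pushout-products, and $\sSt_S$ sends them to $\St_S(i)\,\square\,j$, which is a cofibration by the pushout-product axiom. The problem is the trivial cofibrations. The contravariant model structure on $\sS_{/S}$ is a left Bousfield localization, and such localizations essentially never come with an explicit generating set of trivial cofibrations; one only has a set that detects \emph{local} objects, not all fibrations. Your claim that the maps $p_1^*(i)\,\square\,j$ with one factor acyclic already detect all fibrations is equivalent to the statement that a map in $\sS_{/S}$ is a contravariant fibration iff it is a Reedy fibration whose $i_1^*$-image is a contravariant fibration in $\sset_{/S}$. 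That statement is true, but it is precisely the nontrivial input the paper imports from \cite{rasekh2023cartfibcss} and \cite{rasekh2023left}, and it is not a formal consequence of $(\sS_{/S})^{contra}$ being a simplicial localization. So as written your argument is circular or incomplete at exactly the point you flag.

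The paper avoids this by working on the right adjoint. It uses the recognition principle \cite[Proposition~7.15]{joyaltierney2007qcatvssegal}, which says that to verify $\sUn_S$ is right Quillen it suffices to check that it preserves fibrant objects and fibrations and weak equivalences between fibrant objects. For fibrant objects one first shows $\sUn_S F$ is Reedy fibrant, which only needs that $\sSt_S(\partial F(n)\to F(n))=\St_S(\partial\Delta[n]\to\Delta[n])$ is a projective cofibration, and then invokes the cited characterization of right fibrations of simplicial spaces to upgrade Reedy fibrancy plus $i_1^*\sUn_S F=\Un_S F$ being a right fibration to $\sUn_S F$ being fibrant. Fibrations and weak equivalences between fibrant objects are then reflected by $i_1^*$ (as $p_1^*$ is a Quillen equivalence), reducing to the known properties of $\Un_S$. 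This route never needs to name generating trivial cofibrations of the localized structure.
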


\begin{proof}
	First we show the functor $\sUn_S$ takes fibrant objects to Reedy fibrations. We need to show that $\sSt_S$ takes morphisms of the form $(\partial F(n) \to F(n)) \square (A \to B)$ over $S$ to a trivial cofibration, where $A \to B$ is a trivial cofibration of simplicial sets. As the projective model structure is enriched, it suffices to show that $\sSt_S(\partial F(n) \to F(n))$ is a cofibration. However, by definition we have $\sSt_S(\partial F(n) \to F(n))=\St_S(\partial \Delta[n] \to \Delta[n])$ and so this holds by \ref{eq:straightening}.
	
	Now, for a given fibrant object $F\colon\kC[S]^{op} \to \sset$, $\sUn F$ is a Reedy fibration with $\Un_S F = i_1^* \sUn_S F$ a right fibration and so, by \cite[Theorem 1.2]{rasekh2023cartfibcss} and \cite[Remark B.21]{rasekh2023left}, $\sUn_SF$ is also a right fibrations. Finally, $p_1^*$ is a right Quillen equivalence and so reflects weak equivalences and fibrations between fibrant objects. Hence, the fact that $\Un_S$ preserves weak equivalences and fibrations between fibrant objects and \ref{eq:lift} imply that $\sUn_S$ does the same. By \cite[Proposition 7.15]{joyaltierney2007qcatvssegal}, this proves that $\sUn_S$ is right Quillen.
	
	Finally, by \ref{eq:straightening}, \ref{eq:contra} and $2$-out-of-$3$, $(\sSt_S,\sUn_S)$ is a simplicially enriched Quillen equivalence.
\end{proof}

We can now state the desired corollaries. First of all, the equivalence of the injective and projective model structure via the identity adjunction gives us the simplicially enriched Quillen equivalence
\[
	\begin{tikzcd}[column sep = 2cm]
		(\sS_{/S})^{contra}  \arrow[r, shift left = 2, "\sSt_S" near end] \arrow[rr, bend left = 17, "\sSt_S"] & 
		\Fun(\kC[S]^{op},\Kan)^{proj} \arrow[r, shift left = 2, "\id" near start] \arrow[l, shift left = 2, "\bot"', "\sUn_S" near start] & 
		\Fun(\kC[S]^{op},\Kan)^{inj} \arrow[l, shift left = 2, "\bot"', "\id" near end] \arrow[ll, bend left = 17, "\sUn_S"]
	\end{tikzcd},
\]
which induces a functor of Kan-enriched categories $\scUn_{\QCat,S}\colon \cFun(\kC[S]^{op},\Kan) \to \RFib_{\QCat}(S)$ immediately giving us the following result.
\begin{corollary} \label{cor:main}
	The unstraightening $\scUn_{\QCat,S}\colon \cFun(\kC[S]^{op},\Kan) \to \RFib_{\QCat}(S)$ is a cosmological biequivalence.
\end{corollary}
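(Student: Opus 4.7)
The plan is to invoke \cite[Corollary E.1.2]{riehlverity2022elements}, which states that the right Quillen functor of any simplicially enriched Quillen equivalence between combinatorial model categories enriched over the Joyal model structure with all objects cofibrant descends to a cosmological biequivalence on underlying $\infty$-cosmoi of fibrant objects. Since $\scUn_{\QCat,S}$ is by construction the restriction to fibrant objects of the right adjoint of the composite simplicially enriched Quillen equivalence displayed just above the corollary, the task reduces to verifying hypotheses and identifying the relevant underlying $\infty$-cosmoi on both sides.

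First I would check the hypotheses. The composite simplicially enriched Quillen equivalence between $(\sS_{/S})^{contra}$ and $\Fun(\kC[S]^{op},\sset^{Kan})^{inj}$ is obtained by stitching \cref{the:main} together with the (trivially simplicial) identity Quillen equivalence between the projective and injective model structures. Both endpoints are combinatorial and simplicially enriched, hence enriched over the Joyal model structure via $\sset^{Kan}\to\sset^{Joy}$, and have all objects cofibrant since cofibrations are monomorphisms in each. This places us squarely in the setting of \cite[Corollary E.1.2]{riehlverity2022elements}.

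Second I would identify the underlying $\infty$-cosmoi. The target is $\cFun(\kC[S]^{op},\Kan)$ by the definition recalled in \cref{subsec:cosmos}. For the source, I would exploit that the Quillen equivalence \ref{eq:contra} is visibly simplicially enriched, since, as observed at the start of \cref{subsec:un rfib}, the functor $p_1^*$ is post-composition with the discrete inclusion $\set\hookrightarrow\sset$ and $i_1^*$ is evaluation at simplicial degree zero. Applying \cite[Corollary E.1.2]{riehlverity2022elements} once more identifies the underlying $\infty$-cosmos of $(\sS_{/S})^{contra}$ with that of $(\sset_{/S})^{contra}$, which is the standard point-set presentation of $\RFib_{\QCat}(S)$.

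The main potential obstacle is this last identification of $\RFib_{\QCat}(S)$ with the underlying $\infty$-cosmos of $(\sset_{/S})^{contra}$, since the model-independent definition in \cite[Definition 5.5.3]{riehlverity2022elements} is formulated directly in cosmological language rather than via a model structure. However, this identification is essentially built into Riehl--Verity's treatment of discrete Cartesian fibrations in $\QCat$, and once it is in hand the corollary follows without further work; in particular, no new combinatorial argument beyond \cref{the:main} is required.
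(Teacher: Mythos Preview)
Your proposal is correct and follows the same route as the paper: both deduce the corollary from \cref{the:main} (composed with the identity projective-to-injective equivalence) via \cite[Corollary E.1.2]{riehlverity2022elements}. You are simply more explicit than the paper, which states the result as following ``immediately''; in particular, your flagging of the identification of the underlying $\infty$-cosmos of $(\sS_{/S})^{contra}$ with $\RFib_{\QCat}(S)$ is a genuine detail the paper leaves implicit.
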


Now, we get the following by applying the homotopy coherent nerve to simplicially enriched adjunctions \cite[Proposition 5.2.4.6]{lurie2009htt}.

\begin{corollary} \label{cor: cor}
	The functor of quasi-categories $N\sUn_S\colon N(\cFun(\kC[S]^{op},\Kan)^\circ) \to N(\RFib(S)^\circ)$ is a right adjoint equivalence with left adjoint given by the left derived functor of $N\sSt_S$.
\end{corollary}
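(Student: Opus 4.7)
The plan is to directly invoke \cite[Proposition 5.2.4.6]{lurie2009htt}, which upgrades a simplicially enriched Quillen adjunction between combinatorial simplicial model categories to an adjunction of quasi-categories at the level of homotopy coherent nerves of fibrant-cofibrant subcategories; moreover, that reference promotes the output to an adjoint equivalence precisely when the input is a Quillen equivalence. So the strategy reduces to verifying the hypotheses of this proposition for the adjunction constructed in the previous subsection.

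First I would confirm the hypotheses. By \cref{the:main}, the adjunction $(\sSt_S,\sUn_S)$ is a simplicially enriched Quillen equivalence, and both $(\sS_{/S})^{contra}$ and $\Fun(\kC[S]^{op},\sset^{Kan})^{proj}$ are combinatorial and simplicial. Next, one observes that $\sUn_S$, being the right adjoint of a simplicially enriched cocontinuous functor between cocomplete simplicially tensored categories, is itself simplicially enriched, and therefore restricts to a simplicial functor between the locally Kan fibrant-cofibrant subcategories $\cFun(\kC[S]^{op},\Kan)^\circ$ and $\RFib(S)^\circ$. Consequently, $N\sUn_S$ is a well-defined functor of quasi-categories.

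Second I would apply \cite[Proposition 5.2.4.6]{lurie2009htt} to conclude that $N\sUn_S$ is a right adjoint of quasi-categories, with left adjoint given by pre-composing $N\sSt_S$ with a (simplicial) cofibrant replacement, i.e.\ by the left derived functor of $N\sSt_S$. Because the underlying Quillen adjunction is in fact a Quillen \emph{equivalence}, the resulting adjunction of quasi-categories is automatically an adjoint equivalence.

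The main obstacle is essentially bookkeeping: one must verify that each ingredient—simplicial enrichment of $\sUn_S$, combinatoriality of both model structures, and the existence of a simplicial cofibrant replacement so that the left derived functor descends to the fibrant-cofibrant subcategory—is in place to cite Lurie's proposition. All of the substantive homotopical content has already been packaged into \cref{the:main}, so no new nontrivial computations are required.
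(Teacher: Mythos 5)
Your proposal is correct and matches the paper's approach exactly: the paper also deduces this corollary by invoking \cite[Proposition 5.2.4.6]{lurie2009htt} applied to the simplicially enriched Quillen equivalence established in \cref{the:main}. Your additional bookkeeping (combinatoriality, simplicial enrichment of the right adjoint, restriction to fibrant-cofibrant objects) is implicit in the paper's one-line justification, so the two arguments coincide in substance.
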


Finally, we can generalize to more general $\infty$-cosmoi. Let $\cK$ be an $\infty$-cosmos of $(\infty,1)$-categories and $\C$ an $\infty$-category. \cref{cor:main} and our observations regarding cosmological biequivalences in \ref{subsec:cosmos} directly implies the following.

\begin{corollary} \label{cor:cosmological}
	The diagram of $\infty$-cosmoi
	\[\cFun(\kC_\cK[\C]^{op},\Kan) \xrightarrow{ \ \scUn_{\QCat, \Und \C} \ } \RFib_{\QCat}(\Und\C) \xleftarrow{ \ \Und \ } \RFib_{\cK}(\C).\] 
	is a natural zig-zag of cosmological biequivalences.
\end{corollary}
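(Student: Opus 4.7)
The plan is to recognise each of the two arrows in the zig-zag as an instance of a cosmological biequivalence already established earlier in the excerpt; no new technical content is required, only a careful unpacking of definitions and an appeal to the preservation properties recorded in \cref{subsec:cosmos}.

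For the first arrow, I would start by invoking the definition of $\kC_\cK[\C]$ given in \cref{subsec:cosmos}, namely $\kC_\cK[\C] = \kC[\Und \C]$. Under this identification, the domain $\cFun(\kC_\cK[\C]^{op},\Kan)$ is literally the $\infty$-cosmos $\cFun(\kC[\Und\C]^{op},\Kan)$ and the functor $\scUn_{\QCat,\Und\C}$ is an instance of the unstraightening cosmological biequivalence of \cref{cor:main} applied to the simplicial set $S = \Und\C$. Hence it is a cosmological biequivalence without further argument.

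For the second arrow, recall that the hypothesis that $\cK$ is an $\infty$-cosmos of $(\infty,1)$-categories means precisely that $\Und = \Hom_\cK(1,-)\colon \cK \to \QCat$ is a cosmological biequivalence (\cref{subsec:cosmos}, following \cite[Definition 1.3.10]{riehlverity2022elements}). The key input is then the stability statement \cite[Corollary 10.3.7]{riehlverity2022elements}, noted in \cref{subsec:cosmos}, which says that the construction $\RFib_{(-)}(-)$ takes a cosmological biequivalence $F\colon \cK \to \cK'$ to a cosmological biequivalence $F\colon \RFib_\cK(\C) \to \RFib_{\cK'}(F\C)$. Applied to $F = \Und$, this yields exactly the claimed cosmological biequivalence $\Und\colon \RFib_\cK(\C) \to \RFib_\QCat(\Und\C)$.

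Finally, naturality of the zig-zag in pairs $(\cK,\C)$, along cosmological biequivalences of $\infty$-cosmoi of $(\infty,1)$-categories and equivalences of $\infty$-categories, is immediate: the construction $\scUn_{\QCat,-}$ is natural in its simplicial set parameter, $\Und$ is a strictly enriched functor, and both constructions $\Cart_{(-)}(-)$ and $\RFib_{(-)}(-)$ are functorial in the relevant variables (see again the discussion at the end of \cref{subsec:cosmos} regarding stability under cosmological biequivalences and under pullback along equivalences). Since both arrows are established as biequivalences and no non-formal check arises, there is really no main obstacle to overcome here; the corollary is a direct assembly of \cref{cor:main} with the cited stability result.
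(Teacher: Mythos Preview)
Your proposal is correct and mirrors the paper's own argument exactly: the paper simply states that \cref{cor:main} together with the stability observations in \cref{subsec:cosmos} yield the result, and you have faithfully unpacked precisely those two ingredients for the two arrows. There is nothing to add.
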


This in particular means that any choice of quasi-pseudoinverse of $\Und$ gives us a quasi-pseudofunctor biequivalence $\cFun(\kC_\cK[\C]^{op},\Kan) \to \RFib_{\cK}(\C)$ which preserves and reflects all relevant $\infty$-categorical properties. If we restrict to the case where $\cK$ is the $\infty$-cosmos of complete Segal spaces, Segal categories or $1$-complicial sets, we have explicit cosmological biequivalences in the other direction (\cref{subsec:cosmos}), which, to simplify notation, we denote by $\Und^{-1}\colon\QCat \to \cK$. In those three cases, for an $\infty$-category $\C$ in $\cK$, there is an equivalence in $\cK$, 
denoted $\eta_{\C}\colon\C \to \Und^{-1}\Und\C$, which gives us a cosmological biequivalence 
\[\eta_{\C}^*\colon \RFib_{\cK}(\Und^{-1}\Und\C)  \to \RFib_{\cK}(\C).\]
Combining these observations, we now get the following final result of this section.

\begin{corollary} \label{cor:cosmological special}
	Let $\cK= \CSS,\Seg, 1{-}\Comp$. Then $\scUn_{\cK,S}\colon \cFun(\kC_\cK[\C]^{op},\Kan) \to \RFib_{\cK}(\C)$ defined as 
	\[\cFun(\kC_\cK[\C]^{op},\Kan) \xrightarrow{ \ \scUn_{\QCat, \Und \C} \ } \RFib_{\QCat}(\Und\C) \xrightarrow{ \ \Und^{-1} \ } \RFib_{\cK}(\Und^{-1}\Und\C) \xrightarrow{ \eta_\C^*} \RFib_{\cK}(\C)\]
	is a cosmological biequivalence.
\end{corollary}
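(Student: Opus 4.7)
The plan is to observe that each of the three functors in the composite is already known to be a cosmological biequivalence, so the result follows from the fact that the class of cosmological biequivalences is closed under composition (which is immediate from the characterization as fully faithful plus essentially surjective cosmological functors, \cite[Definition 1.3.8]{riehlverity2022elements}). Thus the work is simply to justify each factor, all of which rest on results assembled in \cref{subsec:cosmos} together with \cref{cor:main}.

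First, $\scUn_{\QCat,\Und\C}$ is a cosmological biequivalence directly by \cref{cor:main} applied to the simplicial set $\Und\C$. Second, for the three listed choices of $\cK$, Subsection \ref{subsec:cosmos} records that there is an explicit cosmological biequivalence $\Und^{-1}\colon\QCat \to \cK$ (given by $\Nerve$ for $\CSS$ and $\Seg$, and by $(-)^{\natural}$ for $1{-}\Comp$). By the stability statement cited from \cite[Corollary 10.3.7]{riehlverity2022elements}, applying such a cosmological biequivalence fibrewise yields a cosmological biequivalence $\Und^{-1}\colon \RFib_{\QCat}(\Und\C) \to \RFib_{\cK}(\Und^{-1}\Und\C)$. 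Third, since $\Und^{-1}$ is a quasi-pseudoinverse to the biequivalence $\Und$, the unit $\eta_{\C}\colon \C \to \Und^{-1}\Und\C$ is an equivalence in the $\infty$-cosmos $\cK$; then the pullback stability recalled in \cref{subsec:cosmos} (using \cite[Remark 2.1.4.11]{lurie2009htt}) provides that $\eta_\C^*\colon \RFib_{\cK}(\Und^{-1}\Und\C) \to \RFib_{\cK}(\C)$ is again a cosmological biequivalence.

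Composing the three factors gives the claimed cosmological biequivalence $\scUn_{\cK,S}$. There is no real obstacle here, since the heavy lifting has already been done in \cref{the:main} and \cref{cor:main}; the only point requiring care is confirming that the three models $\CSS$, $\Seg$, $1{-}\Comp$ admit explicit quasi-pseudoinverses $\Und^{-1}$ so that the middle factor is literally a named cosmological functor rather than merely an abstract quasi-pseudoinverse, and this is exactly what restricts the statement to these three cases as opposed to the fully general $\cK$ appearing in \cref{cor:cosmological}.
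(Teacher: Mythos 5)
Your proof is correct and follows essentially the same argument the paper makes, namely that each of the three factors in the composite is a cosmological biequivalence by the observations assembled in \cref{subsec:cosmos} together with \cref{cor:main}, and cosmological biequivalences compose. The only cosmetic difference is that the paper does not present this as a standalone proof but rather deduces the corollary directly from the preceding paragraph (\emph{``Combining these observations\ldots''}), which records precisely the three facts you verify: $\scUn_{\QCat,\Und\C}$ is a biequivalence by \cref{cor:main}, $\Und^{-1}$ induces a fibrewise biequivalence by the stability statement from \cite[Corollary 10.3.7]{riehlverity2022elements}, and $\eta_\C^*$ is a biequivalence because pullback along an equivalence is one.
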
 

\subsection{A cosmological unstraightening for Cartesian fibrations}
We now generalize this result from right fibrations to Cartesian fibrations. Let $\sSt^+_S$ be the unique simplicially enriched lift of $\St^+_S$ along $(p_1^+)^*$.
\begin{center}
	\begin{tikzcd}
		(\sset^+)_{/S} \arrow[r, "(p^+_1)^*"] \arrow[dr, "\St^+_S"'] & (\sS^+)_{/S} \arrow[d, "\sSt^+_S"] \\
		& {\Fun(\kC [S]^{op}, \sset^+)}
	\end{tikzcd}
\end{center}
Concretely, for a given simplicial set $T$ over $S$ we have $\sSt_S^+(T \times \Delta[n] \to S)= \sSt_S^+(T \to S) \times \{\Delta[n]^\#\}$, where $\{\Delta[n]^\#\}$ denotes the constant functor with value $\Delta[n]^\#$. Again, by construction $\sSt^+_S$ preserves colimits and hence has a right adjoint $\sUn_S^+\colon \Fun(\kC[S]^{op},\sset^+) \to (\sS^+)_{/S}$. Notice, as we have $\sSt^+_S(p_1^+)^* = \St^+_S$, by uniqueness of right adjoints we have $(i^+_1)^*\sUn_S^+ = \Un_S^+$. We can now prove the main result.

\begin{theorem} \label{the:main marked}
	$(\sSt^+_S,\sUn^+_S)$ is a simplicially enriched Quillen equivalence.
\end{theorem}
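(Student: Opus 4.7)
The plan is to mimic the strategy used for \cref{the:main}, replacing each ingredient by its marked analogue. First, I would verify that $\sUn^+_S$ sends fibrant objects to Reedy fibrations (of marked simplicial spaces). This reduces to checking that $\sSt^+_S$ takes a pushout-product $(\partial F(n) \to F(n)) \square (A \to B)$ over $S$ to a trivial cofibration in the projective model structure $\Fun(\kC[S]^{op},(\sset^+)^{Cart})^{proj}$, where $A \to B$ is a trivial cofibration of marked simplicial sets. Since the projective model structure is simplicial, it suffices to verify that $\sSt^+_S(\partial F(n) \to F(n))$ is a cofibration. By construction of $\sSt^+_S$ through diagram (3.5), this map is identified with $\St^+_S(\partial \Delta[n]^\flat \to \Delta[n]^\flat)$ (or the appropriate marked boundary inclusion over $S$), which is a cofibration because $(\St^+_S,\Un^+_S)$ is a Quillen adjunction via \ref{eq:straightening marked}.

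Next, I would promote ``Reedy fibration'' to ``Cartesian fibration of marked simplicial spaces''. Given a fibrant $F\colon\kC[S]^{op} \to \sset^+$, the object $\sUn^+_S F$ is a Reedy fibration whose underlying map $(i_1^+)^* \sUn^+_S F = \Un^+_S F$ is a Cartesian fibration of marked simplicial sets by \ref{eq:straightening marked}. I would then invoke the marked analogue of \cite[Theorem 1.2]{rasekh2023cartfibcss}, which in this setting is supplied by the characterization of Cartesian fibrations of marked simplicial spaces in \cite{rasekh2021cartfibmarkedvscso} (used implicitly in \ref{eq:contra marked}): a Reedy fibration of marked simplicial spaces over $S$ whose image under $(i_1^+)^*$ is a Cartesian fibration of marked simplicial sets is itself a Cartesian fibration. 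This identifies the fibrant image of $\sUn^+_S$ with the fibrant objects of $(\sS^+_{/S})^{Cart}$.

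Now I would handle weak equivalences. Because $(p_1^+)^*$ is a right Quillen equivalence by \ref{eq:contra marked}, it reflects weak equivalences and fibrations between fibrant objects. Combining this with $(i_1^+)^*\sUn^+_S = \Un^+_S$, the fact that $\Un^+_S$ preserves weak equivalences and fibrations between fibrant objects transfers to $\sUn^+_S$. By \cite[Proposition 7.15]{joyaltierney2007qcatvssegal} this establishes that $\sUn^+_S$ is right Quillen. To conclude that it is a Quillen equivalence, I would apply $2$-out-of-$3$ to the triangle $\sSt^+_S \circ (p_1^+)^* = \St^+_S$ using \ref{eq:straightening marked} and \ref{eq:contra marked}. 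Simplicial enrichment is automatic from the construction of $\sSt^+_S$ in diagram (3.5) as the unique simplicially enriched lift.

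The main obstacle I anticipate is the detection step: showing that a Reedy fibration of marked simplicial spaces whose underlying map of marked simplicial sets is Cartesian is itself a Cartesian fibration. In the unmarked case this was a clean invocation of \cite[Theorem 1.2]{rasekh2023cartfibcss}, but in the marked setting one has to be careful about how the marking interacts with the Reedy structure across the space direction. I expect the argument to go through via the bridge theorems in \cite{rasekh2021cartfibmarkedvscso}, perhaps with a brief unpacking of how Cartesian edges in $(i_1^+)^*\sUn^+_S F$ lift to marked edges of the simplicial space. Everything else is essentially formal transport along the two Quillen equivalences.
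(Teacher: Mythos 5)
Your proposal follows the same strategy as the paper's proof: mimic the argument for \cref{the:main}, checking that $\sUn^+_S$ sends fibrant objects to Reedy fibrations (via $\sSt^+_S$ sending marked boundary inclusions to projective cofibrations), upgrading Reedy fibration to marked Cartesian fibration through the detection theorem from \cite{rasekh2021cartfibmarkedvscso} (combined with \cite{rasekh2023cartfibcss} and \cite[Theorem 3.1.5.1]{lurie2009htt}), transporting preservation of fibrations/weak equivalences between fibrant objects via \ref{eq:contra marked}, and concluding Quillen adjunction via \cite[Proposition 7.15]{joyaltierney2007qcatvssegal} and Quillen equivalence via $2$-out-of-$3$. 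The step you flag as the main obstacle (detection of Cartesian fibrations of marked simplicial spaces by the underlying marked simplicial set) is precisely the step the paper handles with the three-part citation, so your assessment of the proof's pressure point is accurate.
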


\begin{proof}
	We follow the same steps as in the proof of \cref{the:main}. Indeed, as $\St_S^+$ is left Quillen (by \ref{eq:straightening marked}) it takes $\partial \Delta[n] \to \Delta[n]$ over $S$ to a cofibration in $\Fun(\kC[S]^{op},\sset^+)^{proj}$ and so $\sUn^+_S$ takes fibrant objects to Reedy fibrations. Moreover, by \cite[Theorem 1.2]{rasekh2023cartfibcss}, \cite[Corollary 2.46]{rasekh2021cartfibmarkedvscso} and \cite[Theorem 3.1.5.1]{lurie2009htt} a Reedy fibration in $\sS^+_{/S}$ is a Cartesian fibration  if its image under $(i_1^+)^*$ is a Cartesian fibration and so $\sUn_S^+$ takes fibrant objects to Cartesian fibrations, as $\Un_S^+$ does. Similarly, $\sUn_S^+$ takes fibrations (weak equivalences) between fibrant objects to fibrations (weak equivalences) between Cartesian fibrations. Hence, by \cite[Proposition 7.15]{joyaltierney2007qcatvssegal}, $(\sSt^+_S,\sUn^+_S)$ is a Quillen adjunction. Finally, by \ref{eq:straightening marked}, \ref{eq:contra marked} and $2$-out-of-$3$ it is a simplicially enriched Quillen equivalence. 
\end{proof}

We can now state the desired corollaries. Similar to $\sUn_S$ above, $\sUn^+_S$ induces a functor of Kan-enriched categories $\scUn^+_{\QCat,S}\colon \cFun(\kC[S]^{op},1{-}\Comp)^{core} \to \Cart_{\QCat}(S)^{core}$ immediately giving us the following result.

\begin{corollary}\label{cor:main marked}
	The marked unstraightening $\scUn^+_{\QCat,S}\colon \cFun(\kC[S]^{op},1{-}\Comp)^{core} \to \Cart_{\QCat}(S)^{core}$ is a cosmological biequivalence. 
\end{corollary}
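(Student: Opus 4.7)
My plan is to mirror the proof of \cref{cor:main} in the marked setting. Starting from the simplicially enriched Quillen equivalence of \cref{the:main marked}, I would compose with the identity Quillen equivalence between the projective and injective model structures on $\Fun(\kC[S]^{op},(\sset^+)^{Cart})$, yielding a simplicially enriched Quillen equivalence between $(\sS^+_{/S})^{Cart}$ and $\Fun(\kC[S]^{op},(\sset^+)^{Cart})^{inj}$. Restricting $\sUn^+_S$ to fibrant objects produces a Kan-enriched functor which, after unravelling the definitions from \cref{subsec:cosmos}, agrees with $\scUn^+_{\QCat,S}$.

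The cosmological biequivalence claim should then follow by applying the Riehl--Verity machinery cited at the end of \cref{subsec:cosmos} to this simplicially enriched Quillen equivalence. The reason the source and target appear as cores rather than as full $\infty$-cosmoi is that the simplicial enrichment on $(\sset^+)^{Cart}$ and its descendants is over the Kan model structure rather than over the Joyal one, so the induced biequivalence naturally lives between discrete $\infty$-cosmoi.

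I do not anticipate any serious obstacle, since the genuine content has already been carried out in \cref{the:main marked}. The only care needed is the bookkeeping identifying the source with $\cFun(\kC[S]^{op},1{-}\Comp)^{core}$ (which is built into the definition of the functor $\infty$-cosmos recalled in \cref{subsec:cosmos}) and the target with $\Cart_{\QCat}(S)^{core}$, which can be extracted from the Quillen equivalence \ref{eq:contra marked} combined with the stability of $\Cart_{\cK}(-)$ under cosmological biequivalences discussed in \cref{subsec:cosmos}.
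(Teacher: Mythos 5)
Your proposal is correct and matches the paper's (largely implicit) argument: compose the simplicially enriched Quillen equivalence of \cref{the:main marked} with the identity projective/injective adjunction, restrict the right adjoint to fibrant objects, and invoke the Riehl--Verity result that a simplicially enriched right Quillen equivalence induces a cosmological biequivalence on underlying $\infty$-cosmoi. Your explanation that the cores arise because the relevant model structures are Kan-enriched rather than Joyal-enriched is exactly the right reading of why the statement differs in form from \cref{cor:main}.
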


\begin{corollary}\label{cor:cor marked}
  	The functor of quasi-categories $N\sUn^+_S\colon N\cFun(\kC[S]^{op},\Catinfty)^{\circ} \to N\Cart(S)^{\circ}$ is a right adjoint equivalence with left adjoint given by the left derived functor of $N\sSt^+_S$.
\end{corollary}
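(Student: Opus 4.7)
The plan is to mirror the strategy used in \cref{cor: cor}, now applied to the simplicially enriched Quillen equivalence $(\sSt^+_S,\sUn^+_S)$ established in \cref{the:main marked}. The essential input is Lurie's \cite[Proposition 5.2.4.6]{lurie2009htt}, which says that applying the homotopy coherent nerve to a simplicially enriched Quillen adjunction between simplicial model categories in which every object is cofibrant produces an adjunction of quasi-categories on the underlying fibrant-cofibrant subcategories, with the left adjoint computed by the left derived functor. If the Quillen adjunction is moreover a Quillen equivalence, the resulting adjunction of quasi-categories is an adjoint equivalence.

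First I would check the hypotheses of Lurie's proposition. By \cref{subsec:marked spaces}, the Cartesian model structure $(\sS^+_{/S})^{Cart}$ is a simplicial model category, and the same holds for the injective (equivalently projective, after passing to fibrant-cofibrant objects) model structure on $\Fun(\kC[S]^{op},(\sset^+)^{Cart})$ by \cite[Proposition A.3.3.2]{lurie2009htt}. Both model structures have every object cofibrant (as explained in \cref{subsec:cosmos} this is what allows passage to the underlying $\infty$-cosmos), so Lurie's result applies. By \cref{the:main marked} the pair $(\sSt^+_S,\sUn^+_S)$ is a simplicially enriched Quillen equivalence, so the resulting adjunction of quasi-categories is an adjoint equivalence with left adjoint computed as $N\sSt^+_S$ restricted to cofibrant objects, i.e.\ the left derived functor.

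Second, I would identify the targets of the homotopy coherent nerve with the quasi-categories named in the statement. The fibrant-cofibrant objects of $\Fun(\kC[S]^{op},(\sset^+)^{Cart})$ are precisely the strict enriched functors landing in $1{-}\Comp$, whose homotopy coherent nerve is by definition $N\cFun(\kC[S]^{op},\Catinfty)^{\circ}$ in the notation of \cref{subsec:cosmos}. Similarly, the fibrant-cofibrant objects of $(\sS^+_{/S})^{Cart}$ are the Cartesian fibrations over $S$, so that $N(\sS^+_{/S})^{\circ,Cart} = N\Cart(S)^{\circ}$.

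The verifications above are essentially formal given the preceding setup, so I do not expect a genuine obstacle here; the only mild point to be careful about is matching conventions between $\Catinfty$ in the statement and $1{-}\Comp$ used in \cref{cor:main marked}, but these are equivalent $\infty$-cosmoi and agree on the level of the underlying quasi-category via the homotopy coherent nerve. Combining these identifications with the output of Lurie's proposition yields exactly the statement of the corollary.
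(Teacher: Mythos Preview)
Your proposal is correct and follows exactly the paper's approach: the paper deduces \cref{cor:cor marked} (just as \cref{cor: cor}) by applying \cite[Proposition 5.2.4.6]{lurie2009htt} to the simplicially enriched Quillen equivalence of \cref{the:main marked}, and you spell out precisely those verifications. If anything, your write-up is more detailed than the paper's one-line justification.
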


We end this section with a generalization to $\infty$-cosmoi. Let $\cK$ be an $\infty$-cosmos of $(\infty,1)$-categories and $\C$ an $\infty$-category. \cref{cor:main marked} and our understanding of $\infty$-cosmoi now gives us the following.

\begin{corollary} \label{cor:cosmological marked}
	The diagram of $\infty$-cosmoi
	\[\cFun(\kC_\cK[\C]^{op},1{-}\Comp)^{core} \xrightarrow{ \ \scUn^+_{\Und \C} \ } \Cart_{\QCat}(\Und\C)^{core} \xleftarrow{ \ \Und \ } \Cart_{\cK}(\C)^{core}.\]
	is a natural zig-zag of cosmological biequivalences.
\end{corollary}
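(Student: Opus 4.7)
The plan is to assemble \cref{cor:cosmological marked} from three ingredients already in hand: \cref{cor:main marked}, the stability of the $\Cart$-construction under cosmological biequivalences and equivalences in the base, and the defining property of an $\infty$-cosmos of $(\infty,1)$-categories (namely, that $\Und \colon \cK \to \QCat$ is itself a cosmological biequivalence). All of this is recorded in \cref{subsec:cosmos}, so no genuinely new input should be required; the statement is essentially a packaging of those facts.

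First, the left-hand arrow $\scUn^+_{\QCat,\Und\C}$ is a cosmological biequivalence by \cref{cor:main marked}, since $\Und\C$ is a simplicial set (indeed a quasi-category). For the right-hand arrow I would argue as follows. Since $\cK$ is an $\infty$-cosmos of $(\infty,1)$-categories, the underlying quasi-category functor $\Und\colon \cK \to \QCat$ is a cosmological biequivalence by \cite[Definition 1.3.10]{riehlverity2022elements}. By \cite[Corollary 10.3.7]{riehlverity2022elements}, which was highlighted in \cref{subsec:cosmos}, the functor $\Cart_{(-)}$ preserves cosmological biequivalences, yielding a cosmological biequivalence
\[\Und \colon \Cart_{\cK}(\C) \longrightarrow \Cart_{\QCat}(\Und\C).\]
Passing to cores preserves cosmological biequivalences (since core is functorial in cosmological functors and detects fully faithfulness and essential surjectivity levelwise on mapping spaces), so we obtain the required cosmological biequivalence between the core $\infty$-cosmoi.

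Naturality is a formal consequence of the construction: a map $f\colon \C \to \D$ in $\cK$ induces commutative squares between the $\Cart$-$\infty$-cosmoi via pullback, which assemble into the naturality of the zig-zag, and \cref{subsec:cosmos} records (from \cite[Remark 2.1.4.11, Theorem 3.2.0.1]{lurie2009htt}) that pullback along an equivalence in the base is a cosmological biequivalence, which is what lets us further upgrade the zig-zag to a single biequivalence (as is done in \cref{cor:cosmological special marked}).

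I do not expect any genuine obstacle here: the work has been done in \cref{cor:main marked} and in the setup of \cref{subsec:cosmos}. The only point that deserves a careful sentence is the stability of the core construction under cosmological biequivalences — one should note that $(-)^{core}$ is defined pointwise on mapping quasi-categories, and a cosmological biequivalence induces DK-equivalences on mapping quasi-categories (hence on their maximal Kan complexes), so fully faithfulness on cores follows, while essential surjectivity is unchanged by passing to the core. Everything else is bookkeeping.
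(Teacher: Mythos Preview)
Your proposal is correct and follows exactly the route the paper takes: the corollary is deduced from \cref{cor:main marked} together with the facts recorded in \cref{subsec:cosmos} (that $\Und$ is a cosmological biequivalence and that $\Cart$ is stable under such). The paper states this in a single sentence, whereas you have unpacked the details—including the stability of the core construction—but there is no substantive difference in approach.
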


Again it follows that a choice of quasi-pseudoinverse of $\Und$ gives us a quasi-pseudofunctor biequivalence $\cFun(\kC_\cK[\C]^{op},\Kan)^{core} \to \Cart_{\cK}(\C)^{core}$ (preserving and reflecting $\infty$-categorical properties). Moreover, if $\cK$ is the $\infty$-cosmos of complete Segal spaces, Segal categories or $1$-complicial sets, the explicit inverse cosmological biequivalences (\cref{subsec:cosmos}), which we again simply denote by $\Und^{-1}$, and the analogous cosmological biequivalence induced via pullback
\[	\eta_{\C}^*\colon \Cart_{\cK}(\Und^{-1}\Und\C)^{core} \to \Cart_{\cK}(\C)^{core},\]
gives us the following final result of this section.

\begin{corollary} \label{cor:cosmological special marked}
 Let $\cK= \CSS,\Seg, 1{-}\Comp$. Then $\scUn^+_{\cK,S}\colon \cFun(\kC_\cK[\C]^{op},\Kan)^{core} \to \Cart_{\cK}(\C)^{core}$ defined as 
	\[\cFun(\kC_\cK[\C]^{op},1{-}\Comp)^{core} \xrightarrow{ \ \scUn^+_{\QCat, \Und \C} \ } \Cart_{\QCat}(\Und\C)^{core} \xrightarrow{ \ \Und^{-1} \ } \Cart_{\cK}(\Und^{-1}\Und\C)^{core} \xrightarrow{ \eta_\C^*} \Cart_{\cK}(\C)^{core}\]
	is a cosmological biequivalence.
\end{corollary}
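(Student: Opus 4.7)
The plan is to verify that each of the three arrows in the displayed composition is a cosmological biequivalence and then invoke closure of cosmological biequivalences under composition. The argument is a direct marked-analogue of the one for \cref{cor:cosmological special}, replacing $\RFib$ everywhere by $\Cart(-)^{core}$ and appealing to \cref{cor:main marked} in place of \cref{cor:main}.

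First, $\scUn^+_{\QCat,\Und\C}\colon \cFun(\kC[\Und\C]^{op},1{-}\Comp)^{core} \to \Cart_{\QCat}(\Und\C)^{core}$ is a cosmological biequivalence by applying \cref{cor:main marked} with the simplicial set $\Und\C$, together with the identification $\kC_\cK[\C] = \kC[\Und\C]$ from \cref{subsec:cosmos}.

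Second, for each of the three model choices $\cK = \CSS,\Seg,1{-}\Comp$ there is an explicit cosmological biequivalence $\Und^{-1}\colon \QCat \to \cK$ (the nerve $\Nerve$ in the first two cases, the functor $(-)^\natural$ in the third), as recorded in \cref{subsec:cosmos}. By the stability result \cite[Corollary 10.3.7]{riehlverity2022elements} recalled at the end of \cref{subsec:cosmos}, applying $\Und^{-1}$ to Cartesian fibrations yields a cosmological biequivalence $\Cart_{\QCat}(\Und\C)^{core} \to \Cart_{\cK}(\Und^{-1}\Und\C)^{core}$, since the passage to cores is functorial with respect to cosmological biequivalences.

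Third, the unit $\eta_{\C}\colon \C \to \Und^{-1}\Und\C$ is an equivalence in $\cK$ between $(\infty,1)$-categories (by definition of quasi-pseudoinverse, cf.\ \cite[Proposition 10.4.16]{riehlverity2022elements}), and pullback along an equivalence of $\infty$-categories in $\cK$ gives a cosmological biequivalence $\eta_\C^*\colon \Cart_{\cK}(\Und^{-1}\Und\C)^{core} \to \Cart_{\cK}(\C)^{core}$, again as noted in the closing paragraph of \cref{subsec:cosmos}.

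Composing the three yields the desired cosmological biequivalence. There is no substantive obstacle here; the work has all been done in \cref{the:main marked} and \cref{cor:main marked}, and this corollary is an assembly of that result with the stability properties of cosmological biequivalences. The only point to be mindful of is that taking cores commutes with each of the three steps, which it does because the core construction sends cosmological biequivalences of $\infty$-cosmoi to cosmological biequivalences of their discrete cores, and pullback along $\eta_\C$ is defined in $\cK$ itself before passing to cores.
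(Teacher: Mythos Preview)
Your proposal is correct and follows essentially the same approach as the paper. The paper presents this corollary as an immediate consequence of the preceding discussion---composing the biequivalence of \cref{cor:main marked}, the explicit inverse $\Und^{-1}$, and the pullback $\eta_\C^*$---and your three-step verification spells out exactly these ingredients.
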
 

\section{Challenges Towards A Quasi-categorically enriched cosmological Unstraightening}\label{sec:warning}
The model structures on the categories $\Fun(\kC[S]^{op},\sset^+)$ and $\sset^+_{/S}$ can also be enriched over quasi-categories \cite[Remark 3.1.4.5]{lurie2009htt} and the straightening construction is non-strictly enriched as well \cite[Corollary 3.2.1.15]{lurie2009htt}. Hence, one could imagine a lift of the unstraightening construction 
\[\Un^+_S\colon \Fun(\kC[S]^{op},\sset^+) \to \sset^+_{/S},\] 
that is in fact strictly enriched (meaning cotensored) over quasi-categories.  However, existing results do not permit such generalization. Indeed, what we would need is a proof for the following conjecture.

\begin{conjecture}
	There exists a model structure on $\sS^+$ with the following specifications:
	\begin{enumerate}
		\item It is Quillen equivalent to the Joyal model structure.
		\item The inclusion $i^+_1\colon \sset^+ \to \sS^+$ is left Quillen from the Cartesian model structure.
		\item The inclusion $i_1\colon \sset \to \sS^+$ is left Quillen from the Joyal model structure.
	\end{enumerate}
\end{conjecture}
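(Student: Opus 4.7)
The plan is to try to realize the desired model structure on $\sS^+$ as a left Bousfield localization of a suitable base model structure, then to verify the three conditions of the conjecture in turn. As the base, take the injective or Reedy model structure on $\sS^+$ induced from $(\sset^+)^{Cart}$ on each horizontal simplicial level; its cofibrations are monomorphisms of marked bisimplicial sets. This base is combinatorial by marked analogues of \cite[Proposition A.2.8.2]{lurie2009htt}, so one may apply Smith's theorem to left Bousfield localize at a set $W$ of maps chosen to impose both a Segal-type condition in the horizontal direction and a completeness condition linking marked edges to a notion of equivalence.

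Having constructed a candidate model structure, conditions (1) and (2) can be approached by leveraging the work already done in the paper. Condition (2) amounts to showing that $i^+_1$ preserves cofibrations (automatic, since it is the levelwise inclusion of marked simplicial sets into marked simplicial spaces) and trivial cofibrations, which in turn reduces via \ref{eq:contra marked} to ensuring that $W$ contains the images of the generating Cartesian trivial cofibrations. Condition (1), the Quillen equivalence with $\sset^{Joy}$, would then follow by composition along the zig-zag $\sS^+ \simeq (\sset^+)^{Cart} \simeq \sset^{Joy}$, using the Quillen equivalence of \cite[Theorem 3.1.5.1]{lurie2009htt}.

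Condition (3) is where the real work lies: one must show that $i_1\colon \sset^{Joy} \to \sS^+$ sends boundary inclusions to cofibrations (automatic) and Joyal trivial cofibrations -- inner horn inclusions together with the maps enforcing isomorphism-invariance -- to weak equivalences in $\sS^+$. Equivalently, the right adjoint must send fibrant objects to quasi-categories. I expect this to be the principal obstacle. The Cartesian and Joyal model structures present the same homotopy theory through incompatible marking conventions: the Cartesian structure makes essential use of the marking, while the Joyal structure ignores it. Arranging a single localization $W$ that is broad enough to absorb Joyal equivalences under the canonical marking $i_1$, yet narrow enough to preserve the Cartesian content via $i^+_1$ and simultaneously maintain the Quillen equivalence with $\sset^{Joy}$ in condition (1), is a subtle balancing act. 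A possible route is to model $W$ on the anodyne extensions used to construct the quasi-categorical enrichment of $(\sset^+)^{Cart}$ in \cite[Remark 3.1.4.5]{lurie2009htt}, lifted to $\sS^+$; whether such a construction actually satisfies all three conditions -- or whether the conjecture in fact fails -- seems to require significant new input beyond what is currently available.
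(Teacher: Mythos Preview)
The statement you are attempting to prove is a \emph{conjecture} in the paper, not a theorem; the paper provides no proof and explicitly states that ``as of yet such a model structure has remained out of reach.'' So there is no paper proof to compare against, and your proposal cannot be said to match or diverge from one.

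That said, your sketch is a reasonable outline of how one might attempt the problem, and you correctly identify condition (3) as the crux. Your own final sentence concedes that the argument is incomplete and ``seems to require significant new input beyond what is currently available''; this is honest, and in fact agrees with the paper's assessment. The paper goes slightly further in diagnosing the obstruction: it observes that the existing Cartesian model structure on $\sS^+$ from \cref{subsec:marked spaces} satisfies (1) and (2) but has $i_1$ left Quillen only from the \emph{Kan} model structure rather than the Joyal one, and it links the missing ingredient to an open problem about model structures for double $\infty$-categories compatible with the square construction (conjectured by Gaitsgory--Rozenblyum, settled only in the strict case). Your Bousfield-localization strategy does not obviously circumvent this, and indeed the tension you describe between the Cartesian and Joyal marking conventions is precisely the manifestation of that deeper obstruction. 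In short: your plan is sensible but does not constitute a proof, and the paper does not claim one either.
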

Assuming we had such a model structure, we could conceivably repeat the arguments in \cref{the:main marked} to obtain a quasi-categorically enriched lift for the unstraightening construction. 

On a first view, settling such a conjecture might appear straightforward. Indeed, the model structure on marked simplicial spaces described in \cref{subsec:marked spaces} almost satisfies these conditions, with the sole difference that $i_1\colon \sset \to \sS^+$ is left Quillen from the Kan model structure. However, as of yet such a model structure has remained out of reach. In fact, one way to obtain such a model structure would be to have a working model structure for double $\infty$-categories on bisimplicial sets which interacts well with the ``square construction", which takes an $(\infty,2)$-category to a double $\infty$-category. The existence was already conjectured by Gaitsgory and Rozenblyum \cite[Chapter 10, Theorems 4.1.3 and 5.2.3]{gaitsgoryrozenblyum2017dagI}. While the conjecture has now been settled in the case of strict double categories \cite[Theorem 6.1]{gmsv2023square}, the $\infty$-categorical case has remained open. 

\bibliographystyle{alpha}
\bibliography{main}

\begin{thebibliography}{GMSV23}

\bibitem[BSP21]{barwickschommerpries2021unicity}
Clark Barwick and Christopher Schommer-Pries.
\newblock On the unicity of the theory of higher categories.
\newblock {\em J. Amer. Math. Soc.}, 34(4):1011--1058, 2021.

\bibitem[Cis19]{cisinski2019highercategories}
Denis-Charles Cisinski.
\newblock {\em Higher categories and homotopical algebra}, volume 180 of {\em Cambridge Studies in Advanced Mathematics}.
\newblock Cambridge University Press, Cambridge, 2019.

\bibitem[CN22]{cisinskinguyen2022straightening}
Denis-Charles Cisinski and Hoang~Kim Nguyen.
\newblock The universal cocartesian fibration.
\newblock {\em arXiv preprint}, 2022.
\newblock \href{https://arxiv.org/abs/2210.08945}{arXiv:2210.08945}.

\bibitem[GMSV23]{gmsv2023square}
L{\'e}onard Guetta, Lyne Moser, Maru Sarazola, and Paula Verdugo.
\newblock Fibrantly-induced model structures.
\newblock {\em arXiv preprint}, 2023.
\newblock \href{https://arxiv.org/abs/2301.07801}{arXiv:2301.07801}.

\bibitem[GR17]{gaitsgoryrozenblyum2017dagI}
Dennis Gaitsgory and Nick Rozenblyum.
\newblock {\em A study in derived algebraic geometry. {V}ol. {I}. {C}orrespondences and duality}, volume 221 of {\em Mathematical Surveys and Monographs}.
\newblock American Mathematical Society, Providence, RI, 2017.

\bibitem[HM15]{heutsmoerdijk2015leftfibrationi}
Gijs Heuts and Ieke Moerdijk.
\newblock Left fibrations and homotopy colimits.
\newblock {\em Math. Z.}, 279(3-4):723--744, 2015.

\bibitem[JT07]{joyaltierney2007qcatvssegal}
Andr\'{e} Joyal and Myles Tierney.
\newblock Quasi-categories vs {S}egal spaces.
\newblock In {\em Categories in algebra, geometry and mathematical physics}, volume 431 of {\em Contemp. Math.}, pages 277--326. Amer. Math. Soc., Providence, RI, 2007.

\bibitem[LMG15]{lowmazelgee2015fraction}
Zhen~Lin Low and Aaron Mazel-Gee.
\newblock From fractions to complete {S}egal spaces.
\newblock {\em Homology Homotopy Appl.}, 17(1):321--338, 2015.

\bibitem[Lur09]{lurie2009htt}
Jacob Lurie.
\newblock {\em Higher topos theory}, volume 170 of {\em Annals of Mathematics Studies}.
\newblock Princeton University Press, Princeton, NJ, 2009.

\bibitem[Ras21]{rasekh2021cartfibmarkedvscso}
Nima Rasekh.
\newblock Quasi-categories vs. {S}egal spaces: {C}artesian edition.
\newblock {\em J. Homotopy Relat. Struct.}, 16(4):563--604, 2021.

\bibitem[Ras23a]{rasekh2023cartfibcss}
Nima Rasekh.
\newblock Cartesian fibrations of complete {S}egal spaces.
\newblock {\em High. Struct.}, 7(1):40--73, 2023.

\bibitem[Ras23b]{rasekh2023left}
Nima Rasekh.
\newblock Yoneda lemma for simplicial spaces.
\newblock {\em Appl. Categ. Structures}, 31(4):Paper No. 27, 92, 2023.

\bibitem[RSS01]{rss2001simplicialmodel}
Charles Rezk, Stefan Schwede, and Brooke Shipley.
\newblock Simplicial structures on model categories and functors.
\newblock {\em Amer. J. Math.}, 123(3):551--575, 2001.

\bibitem[RV18]{riehlverity2018comprehension}
Emily Riehl and Dominic Verity.
\newblock The comprehension construction.
\newblock {\em High. Struct.}, 2(1):116--190, 2018.

\bibitem[RV22]{riehlverity2022elements}
Emily Riehl and Dominic Verity.
\newblock {\em Elements of $\infty$-Category Theory}.
\newblock Cambridge Studies in Advanced Mathematics. Cambridge University Press, 2022.

\bibitem[Ste17]{stevenson2017covariant}
Danny Stevenson.
\newblock Covariant model structures and simplicial localization.
\newblock {\em North-West. Eur. J. Math.}, 3:141--203, 2017.

\bibitem[Ste24]{stenzel2024unstraightening}
Raffael Stenzel.
\newblock Lurie's unstraightening as a weak biequivalence of $\infty$-cosmoses.
\newblock {\em arXiv preprint}, 2024.
\newblock \href{https://arxiv.org/abs/2403.01167}{arXiv:2403.01167}.

\end{thebibliography}

\end{document}